\begin{document}
\title{Premagic and Ideal Flow Matrices}

\author{Kardi Teknomo}
\orcid{0000-0003-1123-5924}
\affiliation{%
  \institution{Ateneo de Manila University}
  \streetaddress{}
  \city{Quezon City} 
  \country{Philippines} 
}
\email{teknomo@gmail.com}

% The default list of authors is too long for headers}
\renewcommand{\shortauthors}{Teknomo et. al.}

\begin{abstract}
Several interesting properties of a special type of matrix that has a row sum equal to the column sum are shown with the proofs. Premagic matrix can be applied to strongly connected directed network graph due to its nodes conservation flow. Relationships between Markov Chain, ideal flow and random walk on directed graph are also discussed. 
\end{abstract}

%
% The code below should be generated by the tool at
% http://dl.acm.org/ccs.cfm
% Please copy and paste the code instead of the example below. 
%
\begin{CCSXML}
	<ccs2012>
	<concept>
	<concept_id>10010147.10010257.10010293.10010309.10010310</concept_id>
	<concept_desc>Computing methodologies~Non-negative matrix factorization</concept_desc>
	<concept_significance>500</concept_significance>
	</concept>
	<concept>
	<concept_id>10003752.10010061.10010065</concept_id>
	<concept_desc>Theory of computation~Random walks and Markov chains</concept_desc>
	<concept_significance>300</concept_significance>
	</concept>
	</ccs2012>
\end{CCSXML}

\ccsdesc[500]{Computing methodologies~Non-negative matrix factorization}
\ccsdesc[300]{Theory of computation~Random walks and Markov chains}

% We no longer use \terms command
\terms{Theory}

\keywords{Hadamard Product, Row Sum, Column Sum, Closure, Non-Negative}

\maketitle

\section{Introduction}
In this paper, we present a special type of matrix that has a special main property, of which the row sum is equal to the column sum. A well-known \textit{magic matrix} (\cite{Mack},\cite{Swetz}) is a square matrix where the sum of every row is equal to the sum of every column, which is also equal to the sum of diagonal elements. The type of matrix we are introducing is somewhat less restrictive than the magic matrix. Due to some similarity to the magic matrix, we call the matrix as \textit{premagic matrix}. The magic matrix would be a subset of the premagic matrix. 

In this paper, we report the interesting properties of premagic matrix with some applications from ideal flow perspective. Some open problems are also listed.

\section{Literature Review}
In the literature, a model to generate path sequence of the locations of an agent that moves based on random probability distribution is called \textit{random walk}. \textit{Brownian motion} is a subset of random walk where the probability distribution in continuous over time. Random walk on network graph has been studied extensively by \cite{Durrett} and \cite{Grimmett} to mention a few prominent studies. One particular interest within random walk theory is the stationary distribution of nodes. Node stationary distribution is the convergence results of the probability distribution to observe an agent at a node when the random walk is performed at the limit of infinite time step. In simulation of random walk, one could attempt to approximately mimic the infinite time step by setting a large number of time steps. \cite{Blanchard} showed that Random walk on network graph can be studied as Markov Chain. 

\section{Ideal Flow from Random Walk}
Our work on premagic matrix started from the applications of random walk on directed graph for random irreducible networks \cite{Teknomo}. We wanted to come up with a kind of ideal matrices on how the traffic flow in the network should be distributed. We try to imagine that the most distributed flow spatially and temporally may be indicated as the most efficient utilization of the network. Upon observing the properties of the matrices, unifying patterns was found that the matrices of the relative flow are always premagic and can be transformed into whole numbers. 

Suppose we have $N$ random walk agents in an irreducible network over period $T$. The count of the trajectories of the agents on each link is called \textit{link flow} or simply \textit{flow}. \textit{Relative flow} is the link flow divided by a statistical value (i.e. average, sum, min, or max) of all link flows of the network. For simplicity, without losing generality, we will use the minimum flow as the denominator value.
\begin{equation}
\mathbf{F} = \lim_{N.T \to \infty }\frac{\mathbf{R}}{\min {\mathbf{R}}} \text{if } \mathbf{r}_{ij} \neq 0
\label{eq:1}
\end{equation}
The asymptotic of relative flow converged to matrix, which we will called as \textit{ideal flow}.

\begin{definition}
	An \textit{ideal flow matrix} is the asymptotic of relative flow as the results of trajectories of random walk on irreducible network with a given probability distributed over space and time.	
	\label{def:1}
\end{definition}

\section{Premagic Matrix}
The sum of columns can be computed using the matrix multiplication of the row vector one $ \mathbf{j^{T}} $ with a matrix $ \mathbf{A} $ to produce a row vector $ \mathbf{s^{T}} $. To get the vector sum of each row, we multiply from the right. 

\begin{definition}
	Sum of columns (\textit{column sum}) is defined as a row vector by $ \mathbf{s}_{1}^{T}= \mathbf{j}^{T}\mathbf{A} $.
	\label{def:2}
\end{definition}

\begin{definition}
	Sum of rows (\textit{row sum}) is defined as a (column) vector by $ \mathbf{s}_{2}= \mathbf{A}\mathbf{j} $.
	\label{def:3}
\end{definition}

\begin{definition}
A \textit{premagic matrix} is defined either by
\begin{equation}
 \mathbf{s}_{1}= \mathbf{s}_{2} \text{, or}
	\label{eq:2}
\end{equation}
\begin{equation}
  \mathbf{s}_{1}^{T}= \mathbf{s}_{2}^{T} .
	\label{eq:3}
\end{equation}
\label{def:4}
\end{definition}
Premagic matrix $ \mathbf{M} $ is a square matrix which the vector sum of rows (i.e. row sum) is equal to the transpose of the vector sum of columns (i.e. column sum). We can also define premagic matrix by its vector components.

\begin{definition}
Suppose $\mathbf{m}_{j}$  is a vector, and $ \mathbf{m}_{i}^{T}$ is a row vector of a matrix $\mathbf{M}$. A square matrix $\mathbf{M}$ is called \textit{premagic matrix} if $\mathbf{m}_{i}^{T}\mathbf{j}=\mathbf{j}^{T}\mathbf{m}_{j}$.
\label{def:5}
\end{definition}

A matrix whose row sum is equal to the column sum is identical to the matrix whose kernel of the antisymmetric part is a vector one. This also means that any symmetric matrix has a kernel of the antisymmetric part to be a vector one. Let $ T(\mathbf{A}) \mapsto \mathbf{A}-\mathbf{A}^{T} $  be a linear transformation for any square matrix $ \mathbf{A} $. The kernel of the transformation is the set of all vectors $ \mathbf{v} $ such that $ T(\mathbf{A})=0 $ . Thus, the kernel of $ T(\mathbf{A}) $ is a symmetric matrix because $ \mathbf{A}-\mathbf{A}^{T} = \mathbf{0} $  or $ \mathbf{A}=\mathbf{A}^{T} $ . This leads us to an alternative definition of premagic matrix. 

\begin{definition}
	A \textit{premagic matrix} $ \mathbf{M} $ is a matrix whose kernel of the antisymmetric part is a vector one:
	\label{def:6}
	\begin{equation}
	(\mathbf{M}-\mathbf{M}^{T})\mathbf{j}=0
	\label{eq:4}
	\end{equation}
\end{definition}

\section{Properties of Premagic Matrix}
In this section, we prove the theory of premagic matrix by showing its properties. 

Let us first take notes on the trivia cases of the premagic matrix. Any scalar can be considered as a 1 by 1 matrix, and therefore any scalar is a premagic matrix. Any square matrix with equal values for all elements is a premagic matrix. Therefore, the zero square matrix is a premagic matrix. The one square matrix is also a premagic matrix. Furthermore, any diagonal matrix is a premagic matrix. As a special case of diagonal matrix, an identity matrix is a premagic matrix. As mentioned above, any magic matrix is also trivia case of the premagic matrix.

A set has a \textit{closure} under an operation if the result of that operation on the members of the set would always produce a member of the same set. The following propositions and theorems are related to the basic properties of the premagic matrix.

\begin{proposition}
	\label{Proposition:1}
	If $ \{\mathbf{M}_{1},\mathbf{M}_{2},\cdots \mathbf{M}_{n}\} $ are premagic matrices of equal size, $ \mathbf{J} $  is a square matrix of one and $k \in \mathbb{R}$, then the following basic operations are closed.
	\begin{itemize}
		\item Addition Commutative:  
			\begin{equation}
			\mathbf{M}_{1}+\mathbf{M}_{2}=\mathbf{M}_{2}+\mathbf{M}_{1}
			\label{eq:5}
			\end{equation}
	\item Direct Product Commutative: 
		\begin{equation}
		\mathbf{M}_{1} \odot \mathbf{M}_{2}=\mathbf{M}_{2}\odot \mathbf{M}_{1}
		\label{eq:6}
		\end{equation} 
	\item Addition Associative: 
		\begin{equation}
		(\mathbf{M}_{1}+\mathbf{M}_{2})+\mathbf{M}_{3}=\mathbf{M}_{1}+(\mathbf{M}_{2}+\mathbf{M}_{3})
		\label{eq:7}
		\end{equation} 
	\item Direct Product Associative:  
		\begin{equation}
		(\mathbf{M}_{1} \odot \mathbf{M}_{2}) \odot \mathbf{M}_{3}=\mathbf{M}_{1} \odot (\mathbf{M}_{2} \odot \mathbf{M}_{3})
		\label{eq:8}
		\end{equation} 
	\item Distributive:
		\begin{equation}
		(\mathbf{M}_{1} \pm \mathbf{M}_{2}) \odot \mathbf{M}_{3}=(\mathbf{M}_{1} \odot \mathbf{M}_{3}) \pm (\mathbf{M}_{2} \odot \mathbf{M}_{3})
		\label{eq:9}
		\end{equation}	  
	\item Addition:  
		\begin{equation}
		\mathbf{M}_{3}=\mathbf{M}_{1} + \mathbf{M}_{2} 
		\label{eq:10}
		\end{equation}		
	\item Subtraction:  
		\begin{equation}
		\mathbf{M}_{3}=\mathbf{M}_{1} - \mathbf{M}_{2} 
		\label{eq:11}
		\end{equation}		
	\item Scalar multiple:  
		\begin{equation}
		\mathbf{M}_{2} = k \mathbf{M}_{1} 
		\label{eq:12}
		\end{equation}		
	\item Hadamard Product:  
		\begin{equation}
		\mathbf{M}_{2}=\mathbf{M}_{1}  \odot  k\mathbf{J}
		\label{eq:13}
		\end{equation}		
	\item Transpose:
		\begin{equation}
		\mathbf{M}_{2}=\mathbf{M}_{1}^{T} 
		\label{eq:14}
		\end{equation}	  
	\item Shift:  
		\begin{equation}
		\mathbf{M}_{2}=\mathbf{M}_{1} \pm k \mathbf{J}
		\label{eq:15}
		\end{equation}	  
	\item Adding to or subtracting from any diagonal element:  
		\begin{equation}
		\mathbf{M}_{2}=\mathbf{M}_{1} \pm \textup{diag}(\mathbf{J})
		\label{eq:16}
		\end{equation}
	\item Removal of diagonal elements:  
		\begin{equation}
		\mathbf{M}_{2}=\mathbf{M}_{1} - \textup{diag}(\mathbf{M}_{1})
		\label{eq:17}
		\end{equation}
	\item Addition or subtraction with scaled Identity: 
		\begin{equation}
		\mathbf{M}_{2}=\mathbf{M}_{1} \pm k \mathbf{I}
		\label{eq:18}
		\end{equation}	
	\item Linear Combination: 
		\begin{equation}
		\mathbf{M}_{n}=k_{1} \mathbf{M}_{1} \pm k_{2} \mathbf{M}_{2} \pm \cdots \pm k_{n-1} \mathbf{M}_{n-1}
		\label{eq:19}
		\end{equation}	
	\end{itemize}
\end{proposition}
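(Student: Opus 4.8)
The plan is to reduce every one of the fifteen assertions to a single structural fact: for each fixed $n$, the set $P_n$ of $n\times n$ premagic matrices is a linear subspace of $\mathbb{R}^{n\times n}$. By Definition~\ref{def:6}, $\mathbf{M}\in P_n$ exactly when $T(\mathbf{M})\mathbf{j}=\mathbf{0}$, where $T(\mathbf{A})=\mathbf{A}-\mathbf{A}^{T}$; since $T$ is linear and right-multiplication by $\mathbf{j}$ is linear, the composite $\mathbf{A}\mapsto T(\mathbf{A})\mathbf{j}$ is a linear map $\mathbb{R}^{n\times n}\to\mathbb{R}^{n}$ and $P_n$ is precisely its kernel, hence a subspace. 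I would establish this as the one genuine lemma and then read everything else off it.

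Granting the lemma, the assertions split into three groups. First, Eqs.~\eqref{eq:10},~\eqref{eq:11},~\eqref{eq:12}, and~\eqref{eq:19} are exactly the statements that $P_n$ is closed under sums, differences, scalar multiples, and arbitrary linear combinations, so they are immediate from the subspace property, the last by a one-line induction. Second, Eqs.~\eqref{eq:5}--\eqref{eq:9} --- commutativity and associativity of $+$, commutativity and associativity of $\odot$, and distributivity of $\odot$ over $\pm$ --- are algebraic identities that hold verbatim for all square matrices under the entrywise operations, so there is nothing premagic-specific to check; the $+$-laws additionally keep both sides inside $P_n$ by the first group, whereas for the $\odot$-laws only the displayed identity is being asserted (one must not read more into~\eqref{eq:6},~\eqref{eq:8}, and~\eqref{eq:9} than the stated identities, since the Hadamard product of two premagic matrices need not itself be premagic).

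Third, the remaining items each modify $\mathbf{M}_1$ by adding or subtracting an auxiliary matrix that is itself premagic, so they follow from closure under $\pm$ together with the trivia cases recorded just before the proposition: in~\eqref{eq:13}, $\mathbf{M}_1\odot k\mathbf{J}=k\mathbf{M}_1$, which is the scalar case; in~\eqref{eq:15},~\eqref{eq:16}, and~\eqref{eq:18}, the matrices $k\mathbf{J}$, $\textup{diag}(\mathbf{J})$, and $k\mathbf{I}$ are all-constant or diagonal, hence symmetric, hence premagic; and in~\eqref{eq:17}, $\textup{diag}(\mathbf{M}_1)$ is diagonal, hence premagic. The transpose identity~\eqref{eq:14} is the lone item wanting a direct verification, and it is one line: $\bigl(\mathbf{M}_1^{T}-(\mathbf{M}_1^{T})^{T}\bigr)\mathbf{j}=-\bigl(\mathbf{M}_1-\mathbf{M}_1^{T}\bigr)\mathbf{j}=\mathbf{0}$.

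I do not anticipate a real obstacle: the content of the proposition is entirely the subspace lemma, and each bullet is a corollary. The only care needed is organizational --- collapsing the partly redundant list into the three groups above, pinning down the intended reading of $k\mathbf{J}$ and $\textup{diag}(\mathbf{J})$ as the all-constant and diagonal matrices they denote, and keeping the items that are mere identities distinct from the genuine closure statements --- so that the write-up does not become fifteen repetitions of the same one-line remark.
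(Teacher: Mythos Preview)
Your proposal is correct and takes a cleaner, more structural route than the paper's own proof. The paper argues each item informally at the level of row and column sums (e.g., ``addition does not change the order of the sums,'' and a separate observation that only the off-diagonal entries matter so diagonal perturbations are harmless), treating the fifteen bullets as fifteen small remarks. You instead prove one lemma---that $P_n=\ker\bigl(\mathbf{A}\mapsto(\mathbf{A}-\mathbf{A}^{T})\mathbf{j}\bigr)$ is a linear subspace---and then classify the bullets into (i) direct consequences of the subspace property, (ii) ambient algebraic identities with no premagic content, and (iii) perturbations by matrices already known to be premagic from the trivia list. This buys you uniformity and a shorter write-up; it also makes explicit a point the paper leaves implicit, namely that \eqref{eq:6}, \eqref{eq:8}, \eqref{eq:9} are only identities and not closure statements (indeed $\mathbf{M}_1\odot\mathbf{M}_2$ need not be premagic). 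The paper's approach, by contrast, yields the small side observation that premagic-ness is determined by the off-diagonal pattern alone, which you recover only indirectly via ``$\textup{diag}(\mathbf{M}_1)$ is diagonal, hence premagic.'' Either argument is fine; yours is the tidier one.
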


\begin{proof}
	Commutative, associative and distributive properties are simply derived from matrix addition and Hadamard product. From here, it is also clear that square matrix one $ \mathbf{J} $  is the identity of direct product and square matrix zero is the identity of addition. Addition and subtraction of any premagic matrix with any premagic matrix will produce another premagic matrix because the sum of rows and the sum of columns are simply added or subtracted without a change of order. Similarly, the scalar multiple of a real number to any premagic matrix or Hadamard Product of a premagic matrix with a constant matrix will only scale the sum of rows and the sum of columns without changing the order of the rows or the columns. Transposing the operation does not change the sum of rows and the sum of columns, thus the premagic matrix remains premagic after transposing the operation. Adding or subtracting premagic matrix with a constant will increase or decrease the row sum and column sum with the same amount without a change of order.
	Suppose $ m_{ij} $  is a diagonal element with $ i=j $. The sum of columns is denoted by $ s_{i} $. The sum of a non-diagonal element on row $ i $ is $ n_{i}=s_{i}-m_{ii} $. By definition of a premagic matrix, the sum of rows = the sum of columns = $ s_{i} $. Thus, the sum of non-diagonal elements on column $ i $ is also $ n_{i}=s_{i}-m_{ii} $. In other words, what makes a matrix a premagic matrix is the sum of the non-diagonal elements, not the diagonal elements.  Adding or subtracting any diagonal element to a premagic matrix will still produce another premagic matrix because the sum of the non-diagonal elements remain the same. When we remove the diagonal elements of a premagic matrix to make the diagonal elements become zeros, it will still produce a premagic matrix. Thus, adding or subtracting with a scaled identity matrix does not change the order of sums. Since the operations of addition and subtraction and the scalar multiple are closures, linear combination is also closed.	
\end{proof} 

\begin{definition}
	A permutation matrix $ \mathbf{P} $ is (0,1) matrix with a single unit entry in each row and in each column and zeros elsewhere.
	\label{def:7}
\end{definition}

\begin{proposition}
	\label{Proposition:2}
	Multiplication of a premagic matrix by a permutation matrix $ \mathbf{P} $ does not change the premagic properties. Row permutation or column permutation of a premagic matrix produces another premagic matrix.
	\begin{equation}
	\mathbf{M}_{2}= \mathbf{P}^{T} \mathbf{M}_{1} \mathbf{P}
	\label{eq:20}
	\end{equation}
\end{proposition}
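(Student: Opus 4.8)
The plan is to work from the antisymmetric-kernel characterization of Definition~\ref{def:6}, since the conjugation in \eqref{eq:20} interacts cleanly with transposition. First I would record two elementary facts about a permutation matrix $\mathbf{P}$: it is orthogonal, $\mathbf{P}^{T}\mathbf{P}=\mathbf{P}\mathbf{P}^{T}=\mathbf{I}$ (so $\mathbf{P}^{-1}=\mathbf{P}^{T}$), and it fixes the vector one, $\mathbf{P}\mathbf{j}=\mathbf{j}$ and $\mathbf{P}^{T}\mathbf{j}=\mathbf{j}$, because applying a permutation merely reorders the (all equal) entries of $\mathbf{j}$. These two observations are really the whole engine of the argument.

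Next, I would set $\mathbf{M}_{2}=\mathbf{P}^{T}\mathbf{M}_{1}\mathbf{P}$ and note that $\mathbf{M}_{2}^{T}=\mathbf{P}^{T}\mathbf{M}_{1}^{T}\mathbf{P}$, so the antisymmetric part transforms by conjugation, $\mathbf{M}_{2}-\mathbf{M}_{2}^{T}=\mathbf{P}^{T}(\mathbf{M}_{1}-\mathbf{M}_{1}^{T})\mathbf{P}$. Multiplying on the right by $\mathbf{j}$ and using $\mathbf{P}\mathbf{j}=\mathbf{j}$ gives $(\mathbf{M}_{2}-\mathbf{M}_{2}^{T})\mathbf{j}=\mathbf{P}^{T}(\mathbf{M}_{1}-\mathbf{M}_{1}^{T})\mathbf{j}=\mathbf{P}^{T}\mathbf{0}=\mathbf{0}$, where the middle equality is the hypothesis that $\mathbf{M}_{1}$ is premagic. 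Hence $\mathbf{M}_{2}$ satisfies \eqref{eq:4} and is premagic. Equivalently, in the language of Definitions~\ref{def:2}--\ref{def:4}, the row sum $\mathbf{M}_{2}\mathbf{j}=\mathbf{P}^{T}\mathbf{M}_{1}\mathbf{j}$ and the column-sum vector $\mathbf{M}_{2}^{T}\mathbf{j}=\mathbf{P}^{T}\mathbf{M}_{1}^{T}\mathbf{j}$ are obtained by applying the \emph{same} permutation $\mathbf{P}^{T}$ to the (equal) row- and column-sum vectors of $\mathbf{M}_{1}$, so they remain equal; I would present the proof in whichever of these two equivalent forms reads most smoothly alongside the earlier propositions.

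I do not expect a genuine obstacle here; the one point that needs care is conceptual rather than computational. A one-sided permutation $\mathbf{P}\mathbf{M}_{1}$ (rows only) or $\mathbf{M}_{1}\mathbf{P}$ (columns only) does \emph{not} in general preserve premagicity, since it permutes the row-sum vector while leaving the column-sum vector fixed, or vice versa. So the statement must be read as a simultaneous relabeling of rows and columns by the same permutation, i.e.\ exactly the conjugation $\mathbf{P}^{T}\mathbf{M}_{1}\mathbf{P}$ of \eqref{eq:20}, which corresponds to a consistent reindexing of the nodes of the underlying graph; I would make this explicit in the write-up so the phrase ``row permutation or column permutation'' is not misread. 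As a small supplement, I would also note that this action of the permutation matrices is closed on the premagic matrices, in keeping with the closure theme of Proposition~\ref{Proposition:1}.
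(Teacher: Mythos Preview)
Your argument is correct and considerably more careful than the paper's own proof, which is a single informal sentence: ``Premagic matrix remains the same if only exchange the row or column position because the row sum and column sum is constant when the matrix is permuted.'' The paper essentially asserts invariance without computation, whereas you work explicitly from Definition~\ref{def:6}, use the two key facts $\mathbf{P}^{T}=\mathbf{P}^{-1}$ and $\mathbf{P}\mathbf{j}=\mathbf{j}$, and show that the antisymmetric part of $\mathbf{M}_{2}$ is the $\mathbf{P}$-conjugate of that of $\mathbf{M}_{1}$. Both routes are elementary, but yours actually \emph{uses} the displayed formula \eqref{eq:20}, while the paper's sentence does not distinguish between conjugation and one-sided multiplication.

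Your final conceptual caveat is the more valuable contribution: a one-sided permutation $\mathbf{P}\mathbf{M}_{1}$ or $\mathbf{M}_{1}\mathbf{P}$ need \emph{not} be premagic, since it permutes one of the sum vectors while leaving the other fixed. A $2\times 2$ example such as $\bigl(\begin{smallmatrix}1&2\\2&3\end{smallmatrix}\bigr)$ already exhibits this after a row swap. The paper's wording ``row permutation or column permutation'' and its proof sentence are ambiguous on exactly this point; your reading of the proposition as simultaneous relabeling via \eqref{eq:20} is the only one under which the claim is true, and making that explicit is a genuine improvement over the original.
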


\begin{proof}
	Premagic matrix remains the same if only exchange the row or column position because the row sum and column sum is constant when the matrix is permuted.
\end{proof}
		
\begin{theorem}
Any symmetric matrix is a premagic matrix.
\label{Theorem:1}
\end{theorem}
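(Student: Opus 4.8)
The plan is to derive the statement immediately from the kernel characterization of premagic matrices given in Definition~\ref{def:6}, equation~\eqref{eq:4}. Recall that a square matrix $\mathbf{M}$ is premagic precisely when $(\mathbf{M}-\mathbf{M}^{T})\mathbf{j}=0$, i.e.\ when the vector one lies in the kernel of the antisymmetric part of $\mathbf{M}$. So I would first observe that if $\mathbf{M}$ is symmetric then, by definition of symmetry, $\mathbf{M}=\mathbf{M}^{T}$, hence $\mathbf{M}-\mathbf{M}^{T}=\mathbf{0}$. Applying this zero matrix to $\mathbf{j}$ gives $(\mathbf{M}-\mathbf{M}^{T})\mathbf{j}=\mathbf{0}\,\mathbf{j}=0$, which is exactly condition~\eqref{eq:4}. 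Therefore $\mathbf{M}$ is premagic.

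As an alternative, more elementary route that avoids invoking Definition~\ref{def:6}, I would verify equations~\eqref{eq:2}--\eqref{eq:3} directly. Using Definitions~\ref{def:2} and~\ref{def:3}, the column sum of $\mathbf{M}$ is $\mathbf{s}_{1}^{T}=\mathbf{j}^{T}\mathbf{M}$ and the row sum is $\mathbf{s}_{2}=\mathbf{M}\mathbf{j}$. Transposing the row sum gives $\mathbf{s}_{2}^{T}=(\mathbf{M}\mathbf{j})^{T}=\mathbf{j}^{T}\mathbf{M}^{T}$. If $\mathbf{M}=\mathbf{M}^{T}$ this equals $\mathbf{j}^{T}\mathbf{M}=\mathbf{s}_{1}^{T}$, so $\mathbf{s}_{1}=\mathbf{s}_{2}$, which is~\eqref{eq:2}. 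Either argument is a two-line computation; I would present the first since it most transparently shows that symmetry is a strictly stronger condition than being premagic.

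I do not anticipate any genuine obstacle here: the entire content of the theorem was effectively front-loaded into the equivalence established just before Definition~\ref{def:6}, where it was noted that the kernel of the linear transformation $T(\mathbf{A})=\mathbf{A}-\mathbf{A}^{T}$ consists exactly of the symmetric matrices. The only thing worth stating carefully is that this shows symmetric matrices form a proper subclass of premagic matrices (the converse fails, since, e.g., a nonsymmetric circulant or any nonsymmetric magic square is premagic), so the theorem should not be read as a characterization.
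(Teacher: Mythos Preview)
Your proposal is correct. The paper's proof is essentially your second, ``alternative'' route: starting from $\mathbf{A}=\mathbf{A}^{T}$, it left-multiplies by $\mathbf{j}^{T}$ and reads off $\mathbf{s}_{1}^{T}=\mathbf{j}^{T}\mathbf{A}=\mathbf{j}^{T}\mathbf{A}^{T}=(\mathbf{A}\mathbf{j})^{T}=\mathbf{s}_{2}^{T}$, verifying~\eqref{eq:3} directly from Definition~\ref{def:4}. Your preferred route via Definition~\ref{def:6} is equally valid and arguably cleaner, since $\mathbf{M}-\mathbf{M}^{T}=\mathbf{0}$ makes~\eqref{eq:4} trivial; the only difference is which of the equivalent definitions of premagic you invoke, so there is no substantive divergence in content or difficulty.
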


\begin{proof}
	By definition of a symmetric matrix, $ \mathbf{A}=\mathbf{A}^{T} $. Multiplying both sides by vector one $ \mathbf{j}^{T} $, yields $ \mathbf{s}_{1}^{T}= \mathbf{j}^{T}\mathbf{A} = \mathbf{j}^{T}\mathbf{A}^{T} = (\mathbf{A}\mathbf{j})^{T} = \mathbf{s}_{2}^{T}$, which is equal to equation \ref{eq:3}.
\end{proof} 

Note that any symmetric matrix is a premagic matrix but a premagic matrix is not necessarily symmetric. Thus, symmetric matrices can be seen as a subset of a premagic matrix. Many well-known matrices such as Covariance matrix, Correlation Matrix and Laplacian matrix of a graph or a digraph are symmetric matrix that are useful for many applications.

\begin{theorem}
The inverse of a non-singular symmetric matrix is premagic matrix. 
	\label{Theorem:2}
\end{theorem}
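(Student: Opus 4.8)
The plan is to reduce this immediately to Theorem~\ref{Theorem:1} by showing that the inverse of a non-singular symmetric matrix is itself symmetric. Since every symmetric matrix is premagic, this settles the claim without any direct manipulation of row sums or column sums.

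Concretely, let $\mathbf{A}$ be a non-singular symmetric matrix, so that $\mathbf{A}^{T}=\mathbf{A}$ and $\mathbf{A}^{-1}$ exists. First I would invoke the standard identity $(\mathbf{A}^{-1})^{T}=(\mathbf{A}^{T})^{-1}$, which follows from transposing $\mathbf{A}\mathbf{A}^{-1}=\mathbf{I}$ to get $(\mathbf{A}^{-1})^{T}\mathbf{A}^{T}=\mathbf{I}$ and then using uniqueness of the inverse of $\mathbf{A}^{T}$. Substituting $\mathbf{A}^{T}=\mathbf{A}$ gives $(\mathbf{A}^{-1})^{T}=\mathbf{A}^{-1}$, so $\mathbf{A}^{-1}$ is symmetric.

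Then I would apply Theorem~\ref{Theorem:1} to the symmetric matrix $\mathbf{A}^{-1}$ to conclude that $\mathbf{A}^{-1}$ is premagic; equivalently, one checks directly that $\mathbf{j}^{T}\mathbf{A}^{-1}=\mathbf{j}^{T}(\mathbf{A}^{-1})^{T}=(\mathbf{A}^{-1}\mathbf{j})^{T}$, which is condition~\eqref{eq:3}. There is essentially no obstacle here: the only point requiring a word of justification is the commutation of transpose and inverse, and even that is routine. The statement is really a corollary of Theorem~\ref{Theorem:1} together with the closure of the class of symmetric matrices under inversion.
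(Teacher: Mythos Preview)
Your proposal is correct and follows essentially the same route as the paper: the paper's proof simply asserts that the inverse of a symmetric matrix is again symmetric and then invokes Theorem~\ref{Theorem:1}. You have merely added the standard one-line justification $(\mathbf{A}^{-1})^{T}=(\mathbf{A}^{T})^{-1}=\mathbf{A}^{-1}$ that the paper leaves implicit.
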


\begin{proof}
Suppose $\mathbf{A}$ is a non-singular symmetric matrix. Since the inverse of any symmetric matrix is also symmetric, then the inverse of a symmetric premagic matrix is always premagic if it has an inverse. Then, $\mathbf{A}^{-1}$ is premagic matrix.
\end{proof} 

Any square matrix $\mathbf{C}$  can be decomposed into an antisymmetric part $\mathbf{A}$ and a symmetric part $\mathbf{B}$  such that $\mathbf{C} = \mathbf{A} + \mathbf{B} $ . Where $\mathbf{A}=\frac{1}{2}(\mathbf{C}-\mathbf{C}^{T})$  and $\mathbf{B}=\frac{1}{2}(\mathbf{C}+\mathbf{C}^{T})$ . This leads to the next theorem.

\begin{theorem}
	The kernels of the antisymmetric part of a premagic matrix are always vector one.
	\label{Theorem:3}
\end{theorem}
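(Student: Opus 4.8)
The plan is to use the symmetric/antisymmetric decomposition introduced just before the statement together with the characterization of a premagic matrix in Definition \ref{def:6}. Write $\mathbf{M} = \mathbf{A} + \mathbf{B}$ with $\mathbf{A} = \frac{1}{2}(\mathbf{M} - \mathbf{M}^{T})$ the antisymmetric part and $\mathbf{B} = \frac{1}{2}(\mathbf{M} + \mathbf{M}^{T})$ the symmetric part. The goal is then simply to show that the vector one $\mathbf{j}$ belongs to the kernel of $\mathbf{A}$.

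First I would multiply $\mathbf{A}$ on the right by $\mathbf{j}$ and pull out the scalar $\frac{1}{2}$, obtaining $\mathbf{A}\mathbf{j} = \frac{1}{2}(\mathbf{M} - \mathbf{M}^{T})\mathbf{j}$. Since $\mathbf{M}$ is premagic, equation \ref{eq:4} of Definition \ref{def:6} gives $(\mathbf{M} - \mathbf{M}^{T})\mathbf{j} = \mathbf{0}$, hence $\mathbf{A}\mathbf{j} = \mathbf{0}$, i.e. $\mathbf{j} \in \ker(\mathbf{A})$. I would then note that the same holds on the left: because $\mathbf{A}^{T} = -\mathbf{A}$, the identity $\mathbf{A}\mathbf{j} = \mathbf{0}$ is equivalent to $\mathbf{j}^{T}\mathbf{A} = \mathbf{0}^{T}$, so the vector one lies in both the left and the right kernel of the antisymmetric part. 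Finally, since the argument is reversible — if $\mathbf{A}\mathbf{j} = \mathbf{0}$ then $(\mathbf{M} - \mathbf{M}^{T})\mathbf{j} = 2\mathbf{A}\mathbf{j} = \mathbf{0}$, which is exactly Definition \ref{def:6} — this also re-derives the equivalence of the two formulations of a premagic matrix, tying the theorem back to the remark preceding it that the kernel of $T(\mathbf{M}) = \mathbf{M} - \mathbf{M}^{T}$ consists precisely of the symmetric matrices.

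The computation itself is one line, so there is no analytic obstacle; the only real subtlety is interpretive. The statement should be read as ``$\mathbf{j}$ always lies in the kernel of the antisymmetric part,'' not as ``the kernel equals $\operatorname{span}\{\mathbf{j}\}$'': the latter is false, as the zero matrix $\mathbf{M} = \mathbf{0}$ shows, since its antisymmetric part has the entire space as its kernel. I would therefore make sure the write-up states the claim in the correct direction, and, for completeness, remark that $\mathbf{A}$ being odd-dimensional forces a nontrivial kernel in any case, but it is the membership of $\mathbf{j}$ — not the dimension — that is the content here.
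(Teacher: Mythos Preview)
Your argument is correct, but it leans on Definition~\ref{def:6}, which already asserts $(\mathbf{M}-\mathbf{M}^{T})\mathbf{j}=\mathbf{0}$; multiplying by $\tfrac{1}{2}$ then gives the result immediately, so the proof is close to a restatement of that definition. The paper instead starts from the primary characterization in Definition~\ref{def:4}, namely $\mathbf{s}_{1}=\mathbf{s}_{2}$, and unwinds it: $(\mathbf{j}^{T}\mathbf{M})^{T}=\mathbf{M}\mathbf{j}$ gives $\mathbf{M}^{T}\mathbf{j}=\mathbf{M}\mathbf{j}$, hence $(\mathbf{M}^{T}-\mathbf{M})\mathbf{j}=\mathbf{0}$. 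That is the same one-line computation, but it earns the theorem by \emph{deriving} equation~(\ref{eq:4}) from the row--sum/column--sum condition rather than assuming it --- which is really the content here, since Definition~\ref{def:6} was introduced as an alternative definition and Theorem~\ref{Theorem:3} is what justifies that equivalence. Your additional remarks --- that $\mathbf{j}$ lies in both the left and right kernels of the antisymmetric part by $\mathbf{A}^{T}=-\mathbf{A}$, and that the statement should be read as ``$\mathbf{j}\in\ker\mathbf{A}$'' rather than ``$\ker\mathbf{A}=\operatorname{span}\{\mathbf{j}\}$'' --- are useful clarifications the paper does not make explicit.
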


\begin{proof}
	Suppose $\mathbf{M}$ is a premagic matrix. From equation (\ref{eq:2}) we have
	\[ \mathbf{s}_{1}= \mathbf{s}_{2} \]
	\[ (\mathbf{j}^{T}\mathbf{M})^{T} = \mathbf{M}\mathbf{j} \]
	\[ \mathbf{M}^{T}\mathbf{j} = \mathbf{M}\mathbf{j} \]
	\[ (\mathbf{M}^{T}-\mathbf{M})\mathbf{j} = \mathbf{0} \]
	Thus, the kernel of the antisymmetric part of the premagic matrix $\mathbf{M}$  is always a vector one $\mathbf{j}$.
\end{proof} 

Any symmetric matrix can be decomposed into a direct product of a matrix and its transpose. This direct product decomposition is not unique.
 
\begin{theorem}
	\label{Theorem:4}
	Direct product (Hadamard product ) of a square matrix  with its transpose always produces a symmetric matrix with a square of diagonal elements
	\begin{equation}
	\mathbf{B}= \mathbf{A} \odot \mathbf{A}^{T} \Leftrightarrow \mathbf{B}= \mathbf{B}^{T}
	\label{eq:21}
	\end{equation}	
\end{theorem}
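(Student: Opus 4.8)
The plan is to prove the biconditional in (\ref{eq:21}) directly from the entrywise definition of the Hadamard product, treating the two implications separately. For the forward implication, I would write $\mathbf{B}=\mathbf{A}\odot\mathbf{A}^{T}$ componentwise: the $(i,j)$ entry is $b_{ij}=a_{ij}\,(\mathbf{A}^{T})_{ij}=a_{ij}a_{ji}$. Interchanging the indices gives $b_{ji}=a_{ji}a_{ij}=b_{ij}$ because scalar multiplication is commutative, so $\mathbf{B}=\mathbf{B}^{T}$. Setting $i=j$ in the same formula gives $b_{ii}=a_{ii}^{2}$, which is the asserted ``square of diagonal elements'' and, incidentally, shows that every diagonal entry of such a $\mathbf{B}$ is nonnegative.

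For the reverse implication, given a symmetric $\mathbf{B}$ I would exhibit a matrix $\mathbf{A}$ with $\mathbf{A}\odot\mathbf{A}^{T}=\mathbf{B}$, which also recovers the non-uniqueness remarked just before the theorem. A convenient witness is to take $a_{ii}=\sqrt{b_{ii}}$ on the diagonal, $a_{ij}=b_{ij}$ for $i<j$, and $a_{ij}=1$ for $i>j$. Then for $i<j$ one gets $a_{ij}a_{ji}=b_{ij}\cdot 1=b_{ij}$, for $i>j$ one gets $1\cdot b_{ji}=b_{ji}=b_{ij}$ by symmetry of $\mathbf{B}$, and on the diagonal $a_{ii}^{2}=b_{ii}$; hence $\mathbf{A}\odot\mathbf{A}^{T}=\mathbf{B}$. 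Replacing each off-diagonal pair $(1,b_{ij})$ by $(c,b_{ij}/c)$ for any nonzero $c$ produces a different $\mathbf{A}$ with the same Hadamard square, establishing that the decomposition is not unique.

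The one genuine subtlety — and the step I would be most careful about — is the diagonal: the equivalence as written holds for real matrices only once one restricts to symmetric $\mathbf{B}$ whose diagonal entries are nonnegative, since $b_{ii}=a_{ii}^{2}$ forces this (over $\mathbb{C}$ the square root always exists and no restriction is needed). I would state this caveat explicitly when writing the reverse direction. Apart from that remark, every step is a one-line entrywise computation, so I do not anticipate any real obstacle.
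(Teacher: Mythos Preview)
Your forward implication is exactly the paper's: both compute $b_{ij}=a_{ij}a_{ji}$ and invoke commutativity of scalar multiplication. The reverse implication is where you and the paper diverge. The paper simply sets $\mathbf{A}=\sqrt{\mathbf{B}}$, i.e.\ $a_{ij}=\sqrt{b_{ij}}$ entrywise, which over $\mathbb{R}$ tacitly requires \emph{every} entry of $\mathbf{B}$ to be nonnegative. Your triangular witness ($a_{ij}=b_{ij}$ above the diagonal, $a_{ij}=1$ below, $a_{ii}=\sqrt{b_{ii}}$) is strictly more general: it recovers any symmetric $\mathbf{B}$ with nonnegative diagonal, and your scaling remark $(c,\,b_{ij}/c)$ makes the non-uniqueness statement preceding the theorem explicit rather than implicit. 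Your caveat that the biconditional as stated fails over $\mathbb{R}$ when some $b_{ii}<0$ is a point the paper does not address; it is a genuine and correct refinement.
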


\begin{proof}
	Let $ \mathbf{A}=\begin{bmatrix}
		a_{11} & a_{12} & \cdots  & a_{1n}\\ 
		a_{21} & a_{22} & \cdots  & a_{2n}\\ 
		\vdots & \vdots & \ddots  & \vdots\\ 
		a_{n1} & a_{n2} & \cdots  & a_{nn}\\ 
	\end{bmatrix} $
	then $\mathbf{B}= \mathbf{A} \odot \mathbf{A}^{T} = \begin{bmatrix}
		a_{11}^{2}   & a_{12}a_{21} & \cdots  & a_{1n}a_{n1}\\ 
		a_{12}a_{21} & a_{22}^{2}   & \cdots  & a_{2n}a_{n2}\\ 
		\vdots       & \vdots       & \ddots  & \vdots\\ 
		a_{1n}a_{n1} & a_{2n}a_{n2} & \cdots  & a_{nn}^{2}\\ 
	\end{bmatrix}$, which is clearly symmetric matrix because of the commutative property ($ a_{ij}a_{ji} = a_{ji}a_{ij} $). Now, if we have the symmetric matrix $ \mathbf{B}  $ , we can find the standard matrix $ \mathbf{A}  $ by taking the square root of each element of $ \mathbf{B}  $ , that is $ \mathbf{A} = \sqrt{\mathbf{B}}  $
\end{proof} 

\begin{corollary}
	\label{corollary:1}
	The Hadamard Product of a premagic matrix with its transpose is closed.
	\begin{equation}
	\mathbf{M}_{2}= \mathbf{M}_{1} \odot \mathbf{M}_{1}^{T} 
	\label{eq:22}
	\end{equation}
\end{corollary}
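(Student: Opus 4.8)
The plan is to obtain this as an immediate consequence of the two preceding results, Theorem~\ref{Theorem:4} and Theorem~\ref{Theorem:1}, with no genuine computation required. First I would observe that $\mathbf{M}_{1}$ is in particular a square matrix, so Theorem~\ref{Theorem:4} applies verbatim: the Hadamard product $\mathbf{M}_{2} = \mathbf{M}_{1} \odot \mathbf{M}_{1}^{T}$ is a symmetric matrix, since its $(i,j)$ entry is $m_{ij}m_{ji}$, which is invariant under swapping $i$ and $j$ by commutativity of scalar multiplication.

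Next I would appeal to Theorem~\ref{Theorem:1}, which states that every symmetric matrix is premagic. Since $\mathbf{M}_{2}$ is symmetric, it is therefore premagic. Hence the operation $\mathbf{M}_{1} \mapsto \mathbf{M}_{1} \odot \mathbf{M}_{1}^{T}$ sends premagic matrices to premagic matrices, so the set of premagic matrices is closed under this operation, which is exactly the assertion of the corollary.

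I expect no real obstacle here: the statement is genuinely a two-line deduction from results already in hand. The only point worth flagging is that the hypothesis ``$\mathbf{M}_{1}$ is premagic'' is not actually used — the conclusion holds for an \emph{arbitrary} square matrix $\mathbf{M}_{1}$, because symmetry of the Hadamard product of a matrix with its transpose is automatic; the premagic hypothesis is retained only so that the result can be phrased as a closure property of the class. If one preferred a self-contained argument bypassing the cited theorems, one could instead check directly that the $i$-th row sum of $\mathbf{M}_{2}$, namely $\sum_{j} m_{ij}m_{ji}$, equals its $i$-th column sum $\sum_{j} m_{ji}m_{ij}$, these agreeing term by term.
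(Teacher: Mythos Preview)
Your proposal is correct and matches the paper's own proof essentially verbatim: apply Theorem~\ref{Theorem:4} to conclude $\mathbf{M}_{1}\odot\mathbf{M}_{1}^{T}$ is symmetric, then invoke Theorem~\ref{Theorem:1} to conclude it is premagic. Your additional observation that the premagic hypothesis on $\mathbf{M}_{1}$ is not actually used is accurate and worth noting, though the paper does not remark on it.
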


\begin{proof}
	Based on Theorem \ref{Theorem:4}, the Hadamard Product of a square matrix with its transpose always produces a symmetric matrix with a square of diagonal elements. Symmetric matrices are always premagic, based on Theorem \ref{Theorem:1}.
\end{proof} 

\begin{theorem}
	\label{Theorem:5}
	A product of any rectangular matrix $ \mathbf{A} $ and its transpose is premagic.
	\begin{equation}
	\mathbf{M}= \mathbf{A}\mathbf{A}^{T} 
	\label{eq:23}
	\end{equation}
\end{theorem}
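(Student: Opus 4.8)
The plan is to reduce the claim immediately to Theorem~\ref{Theorem:1}. Since every symmetric matrix is premagic, it suffices to check that $\mathbf{M} = \mathbf{A}\mathbf{A}^{T}$ is both square and symmetric. Squareness is automatic: if $\mathbf{A}$ is $m \times n$, then $\mathbf{A}^{T}$ is $n \times m$ and the product $\mathbf{A}\mathbf{A}^{T}$ is $m \times m$, so the rectangular shape of $\mathbf{A}$ only matters in that it makes the product well defined and guarantees it lands in the square case where ``premagic'' is meaningful.

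For symmetry I would use the standard transpose identities $(\mathbf{X}\mathbf{Y})^{T} = \mathbf{Y}^{T}\mathbf{X}^{T}$ and $(\mathbf{A}^{T})^{T} = \mathbf{A}$ to compute
\[ \mathbf{M}^{T} = (\mathbf{A}\mathbf{A}^{T})^{T} = (\mathbf{A}^{T})^{T}\mathbf{A}^{T} = \mathbf{A}\mathbf{A}^{T} = \mathbf{M}. \]
Hence $\mathbf{M}$ is symmetric, and Theorem~\ref{Theorem:1} immediately yields that $\mathbf{M}$ is premagic. If one instead wants a self-contained argument matching Definition~\ref{def:6}, the same observation $\mathbf{M} = \mathbf{M}^{T}$ makes the antisymmetric part $\mathbf{M} - \mathbf{M}^{T}$ vanish identically, so every vector — in particular $\mathbf{j}$ — lies in its kernel, giving $(\mathbf{M}-\mathbf{M}^{T})\mathbf{j}=\mathbf{0}$ directly.

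There is essentially no obstacle; the only point requiring a little care is not over-claiming. The theorem asserts that $\mathbf{A}\mathbf{A}^{T}$ is premagic, not that such products exhaust the premagic (or even the symmetric) matrices, and the proof produces symmetry rather than any genuinely ``non-symmetric'' premagic example. It is worth noting in passing that the companion product $\mathbf{A}^{T}\mathbf{A}$ is premagic by the identical reasoning, and that this result meshes with Theorem~\ref{Theorem:4}, which records the Hadamard analogue $\mathbf{A}\odot\mathbf{A}^{T}$.
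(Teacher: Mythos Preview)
Your proposal is correct and follows essentially the same approach as the paper: show that $\mathbf{A}\mathbf{A}^{T}$ is symmetric and then invoke Theorem~\ref{Theorem:1}. Your version is simply more explicit about squareness and the transpose computation, and adds a harmless remark via Definition~\ref{def:6}; the underlying argument is identical.
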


\begin{proof}
	A product of any rectangular matrix and its transpose always produces a symmetric matrix. Symmetric matrices are always premagic based on Theorem \ref{Theorem:1}.
\end{proof} 

\begin{theorem}
	\label{Theorem:6}
	If the non-diagonal elements of a matrix has equal value, then that matrix is premagic.
\end{theorem}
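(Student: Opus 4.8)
The plan is to reduce the statement to Theorem~\ref{Theorem:1}. First I would fix notation: let $\mathbf{M}$ be an $n\times n$ matrix with $m_{ij}=c$ for every $i\neq j$ and with arbitrary diagonal entries $m_{11},\dots,m_{nn}$. The key observation is that $m_{ij}=c=m_{ji}$ whenever $i\neq j$, while $m_{ii}=m_{ii}$ trivially, so $\mathbf{M}=\mathbf{M}^{T}$; the matrix is symmetric. By Theorem~\ref{Theorem:1} every symmetric matrix is premagic, hence $\mathbf{M}$ is premagic, which is exactly the assertion.

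If a self-contained argument that does not route through symmetry is preferred, I would instead compute the two sum vectors directly from Definitions~\ref{def:2} and \ref{def:3}. The $i$-th entry of the row sum $\mathbf{s}_{2}=\mathbf{M}\mathbf{j}$ is $\sum_{j}m_{ij}=m_{ii}+(n-1)c$, and the $i$-th entry of the column sum $\mathbf{s}_{1}=\mathbf{j}^{T}\mathbf{M}$ is $\sum_{j}m_{ji}=m_{ii}+(n-1)c$; these agree for every $i$, so $\mathbf{s}_{1}=\mathbf{s}_{2}$ and Definition~\ref{def:4} gives the conclusion. A third route, which matches the closure theme of this section, is to write $\mathbf{M}=c\mathbf{J}+\mathbf{D}$ with $\mathbf{D}=\textup{diag}(m_{11}-c,\dots,m_{nn}-c)$: the constant square matrix $c\mathbf{J}$ and the diagonal matrix $\mathbf{D}$ are both premagic by the trivia cases noted at the start of this section, and closure under addition in Proposition~\ref{Proposition:1} finishes it.

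I do not expect any real obstacle; the statement is immediate once "the non-diagonal elements have equal value" is read as "all off-diagonal entries share one common scalar". The only point that deserves an explicit sentence is that the diagonal entries are left completely unconstrained, which is harmless because — as already observed in the proof of Proposition~\ref{Proposition:1} — whether a matrix is premagic is governed entirely by its off-diagonal entries, the diagonal contributing the same amount $m_{ii}$ to both the $i$-th row sum and the $i$-th column sum. I would therefore present the symmetry argument as the main proof and perhaps mention the decomposition $\mathbf{M}=c\mathbf{J}+\mathbf{D}$ as a remark.
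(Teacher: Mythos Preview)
Your proposal is correct. Your primary argument --- observing that equal off-diagonal entries force $m_{ij}=c=m_{ji}$, so $\mathbf{M}$ is symmetric, and then invoking Theorem~\ref{Theorem:1} --- is a genuinely different route from the paper's. The paper does not go through symmetry at all; it takes essentially your third alternative: start from the constant matrix $c\mathbf{J}$, which is premagic by the trivia cases, and then appeal to the diagonal-modification closure (equation~(\ref{eq:16}) in Proposition~\ref{Proposition:1}) to replace the diagonal entries by arbitrary values. Your symmetry argument is shorter and self-contained (it needs only Theorem~\ref{Theorem:1}), while the paper's version has the virtue of reinforcing the closure theme of the section and making explicit that the diagonal is irrelevant. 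Both are valid; your direct row-/column-sum computation would also serve perfectly well as a standalone proof.
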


\begin{proof}
	We start with a trivia case that a square matrix that has equal values for all elements is a premagic matrix. Based on Equation (\ref{eq:16}), adding any diagonal element to a premagic matrix will produce another premagic matrix; thus, if the non-diagonal elements of a matrix has equal value, then that matrix must be premagic. 
\end{proof} 

\begin{theorem}
	\label{Theorem:7}
	The 1-norm of a non-negative premagic matrix is always equal to its infinity-norm and it is equal to the largest sum of rows or the largest sum of columns.
	\begin{equation}
	\left \| \mathbf{A} \right \|_{1} = \left \| \mathbf{A} \right \|_{\infty } = \mathbf{s}_{1}
	\label{eq:24}
	\end{equation}	
\end{theorem}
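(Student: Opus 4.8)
The plan is to reduce everything to the standard characterizations of the induced matrix norms and then invoke the defining property of a premagic matrix. First I would recall the two classical formulas: the operator norm induced by the vector $1$-norm is the maximum absolute column sum, $\|\mathbf{A}\|_{1} = \max_{j}\sum_{i}|a_{ij}|$, and the operator norm induced by the vector $\infty$-norm is the maximum absolute row sum, $\|\mathbf{A}\|_{\infty} = \max_{i}\sum_{j}|a_{ij}|$. If desired this can be packaged as a one-line lemma with the usual proof: bound $\|\mathbf{A}\mathbf{x}\|_{1}\le\bigl(\max_{j}\sum_{i}|a_{ij}|\bigr)\|\mathbf{x}\|_{1}$ and attain equality on a standard basis vector, and dually for the $\infty$-norm.

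Second, I would use the non-negativity hypothesis: since every $a_{ij}\ge 0$, the absolute values are vacuous, so $\|\mathbf{A}\|_{1}$ is precisely the largest component of the column-sum vector $\mathbf{s}_{1}=\mathbf{j}^{T}\mathbf{A}$ (Definition~\ref{def:2}) and $\|\mathbf{A}\|_{\infty}$ is precisely the largest component of the row-sum vector $\mathbf{s}_{2}=\mathbf{A}\mathbf{j}$ (Definition~\ref{def:3}).

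Third, I would apply the premagic property in the form $\mathbf{s}_{1}=\mathbf{s}_{2}$ (Equation~\ref{eq:2}): the two vectors are equal entrywise, hence their maximal entries coincide. This immediately gives $\|\mathbf{A}\|_{1}=\|\mathbf{A}\|_{\infty}$, and the common value is exactly the largest row sum, equivalently the largest column sum, which is what the symbol $\mathbf{s}_{1}$ on the right-hand side of Equation~\ref{eq:24} abbreviates.

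I do not expect a genuine obstacle here; the proof is essentially bookkeeping. The only points requiring care are notational and hypothesis-tracking: the $\mathbf{s}_{1}$ appearing on the right of Equation~\ref{eq:24} must be read as $\max_{i}(\mathbf{s}_{1})_{i}$, the largest entry of the (common) row/column-sum vector rather than the vector itself; and non-negativity is used only to strip the absolute values — without it the premagic hypothesis alone still yields $\max_{i}|(\mathbf{s}_{1})_{i}|=\max_{i}|(\mathbf{s}_{2})_{i}|$, i.e. $\|\mathbf{A}\|_{1}=\|\mathbf{A}\|_{\infty}$, but not the cleaner identification with the largest sum. One may also remark that irreducibility or strong connectivity is irrelevant to this particular property.
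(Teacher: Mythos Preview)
Your main argument is correct and essentially identical to the paper's: recall the standard formulas $\|\mathbf{A}\|_{1}=\max_{j}\sum_{i}|a_{ij}|$ and $\|\mathbf{A}\|_{\infty}=\max_{i}\sum_{j}|a_{ij}|$, drop the absolute values by non-negativity, and use $\mathbf{s}_{1}=\mathbf{s}_{2}$ to match the maxima.

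One correction to your closing remark, however. You write that without non-negativity the premagic hypothesis ``still yields $\max_{i}|(\mathbf{s}_{1})_{i}|=\max_{i}|(\mathbf{s}_{2})_{i}|$, i.e.\ $\|\mathbf{A}\|_{1}=\|\mathbf{A}\|_{\infty}$.'' The first equality is of course true, but it does \emph{not} give the second: the norms involve $\max_{j}\sum_{i}|a_{ij}|$, not $\max_{j}\bigl|\sum_{i}a_{ij}\bigr|$, and these differ once signs appear. A concrete premagic counterexample is
\[
\mathbf{A}=\begin{pmatrix}0&3&-2\\1&0&1\\0&-1&0\end{pmatrix},
\]
with row and column sums both $(1,2,-1)$, yet $\|\mathbf{A}\|_{\infty}=5$ while $\|\mathbf{A}\|_{1}=4$. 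So non-negativity is needed even for the equality $\|\mathbf{A}\|_{1}=\|\mathbf{A}\|_{\infty}$, in line with the paper's own caveat.
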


\begin{proof}
	1-norm is the maximum absolute column sum. The norm is formulated as $ \left \| \mathbf{A} \right \|_{1}=  \underset{j}{\max}\sum_{i=1}^{n} \left | a_{ij} \right | $. The infinity-norm is the maximum absolute row sum. The norm is formulated as $ \left \| \mathbf{A} \right \|_{\infty } = \underset{i}{\max}\sum_{j=1}^{n} \left | a_{ij} \right | $. Since the premagic matrix has an equal sum of rows and sum of columns, the 1-norm is equal to the infinity norm and is equal to the largest sum of rows or the largest sum of columns. This is only true for premagic with non-negative elements. When some of the elements of the premagic matrix are negative, the sum of the absolute value will not produce the same sum as the sum of rows or the sum of columns, thus the proposition is only true for positive or zero element matrices. 
		
\end{proof} 

\begin{theorem}
	\label{Theorem:8}
	Suppose we have $ 2 \times 2 $ premagic matrix \[ 
	\mathbf{M}=\begin{matrix}
		\begin{bmatrix}
			a & b \\ 
			c & d
		\end{bmatrix} & \begin{matrix}
		e_{1}  \\ 
		e_{2} 
	\end{matrix}\\ 
	\begin{matrix}
		e_{1}  & e_{2} 
	\end{matrix} & t
\end{matrix}
 \]
 Then, the following are true:
 \begin{itemize}
 	\item If a premagic matrix   order 2 has an inverse, then the inverse is always premagic.
 	\item $ t+\left | \mathbf{M} \right |=-b^{2}+2b+\left ( a+d+ad \right ) $ which is a quadratic equation
 	\item $ e_{1}e_{2}+\left | \mathbf{M} \right |=c+\left ( a+d \right ) +2ad $
 \end{itemize}
\end{theorem}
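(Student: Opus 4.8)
The plan is to unpack the display first: $\mathbf{M}$ is the $2\times 2$ block $\begin{bmatrix} a & b \\ c & d\end{bmatrix}$ exhibited together with its border of row sums ($e_1,e_2$ down the right), its column sums ($e_1,e_2$ along the bottom), and the grand total $t$ in the corner. The \emph{crucial} first move is to apply the premagic hypothesis $\mathbf{s}_1=\mathbf{s}_2$: for a $2\times 2$ matrix this reads $a+b=a+c$, which forces $b=c$. Hence every order-$2$ premagic matrix is automatically symmetric, and the border entries are then consistent with premagic on the nose: $e_1=a+b=a+c$, $e_2=c+d=b+d$, and $t=e_1+e_2=a+2b+d$. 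This single reduction does the bulk of the work.

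Given this, the first item is essentially Theorem~\ref{Theorem:2}: a non-singular symmetric matrix has a symmetric inverse, and symmetric matrices are premagic by Theorem~\ref{Theorem:1}; since an invertible order-$2$ premagic matrix is symmetric, its inverse is premagic. I would also record the explicit form $\mathbf{M}^{-1}=\frac{1}{ad-b^{2}}\begin{bmatrix} d & -b \\ -b & a\end{bmatrix}$, which is visibly symmetric. It is worth noting that this conclusion genuinely uses the order-$2$ structure: a general premagic matrix need not have a premagic inverse, so Theorem~\ref{Theorem:2} cannot simply be invoked without the symmetry reduction.

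For the second item, substitute $c=b$, so that $\left|\mathbf{M}\right|=ad-bc=ad-b^{2}$ and $t=a+2b+d$; adding these gives $t+\left|\mathbf{M}\right|=-b^{2}+2b+(a+d+ad)$, which is a quadratic in $b$ once $a$ and $d$ are regarded as parameters. For the third item, use $e_1=a+c$ and $e_2=c+d$, expand $e_1e_2=ac+ad+c^{2}+cd$, add $\left|\mathbf{M}\right|=ad-c^{2}$, and collect terms to obtain $e_1e_2+\left|\mathbf{M}\right|=c(a+d)+2ad$.

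No step presents a genuine difficulty, so there is no real obstacle beyond two bookkeeping points: one must impose the constraint $b=c$ \emph{before} evaluating the determinant, since otherwise $ad-bc$ drags along an extra factor that obscures the identities; and one must keep straight that this constraint converts the off-diagonal term $bc$ of the $2\times 2$ determinant into $b^{2}$ (equivalently $c^{2}$). Once $b=c$ is in hand, each of the three claims is a one- or two-line expansion.
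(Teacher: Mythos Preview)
Your argument is correct and mirrors the paper's proof: both deduce $b=c$ from the premagic condition, then obtain $t=a+2b+d$, $\lvert\mathbf{M}\rvert=ad-b^{2}$, the explicit inverse $\frac{1}{ad-b^{2}}\begin{bmatrix} d & -b\\ -b & a\end{bmatrix}$, and the two identities by direct substitution. Your final expression $e_{1}e_{2}+\lvert\mathbf{M}\rvert=c(a+d)+2ad$ agrees with the paper's own computation; the printed form $c+(a+d)+2ad$ in the statement is a typographical slip.
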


\begin{proof}
	The first row of premagic matrix $ \mathbf{M} $ is equal to the first column, $ a+b=e_{1}=a+c  \rightarrow b=c $. The second row of premagic matrix $ \mathbf{M} $ is equal to the second column, $ b+d=e_{2}=c+d  \rightarrow b=c $. Adding the two equations above, we have $ t=e_{1}+e_{2}=a+2b+d $. Since $ b=c $, the determinant is $ \left | \mathbf{M} \right |=ad-b^{2} $. Adding $ t+\left | \mathbf{M} \right |= \left ( a+2b+d \right ) + \left ( ad-b^{2} \right )= -b^{2}+2b+\left ( a+d+ad \right ) $. Multiplying the sum and adding it to the determinant, we have
	\[ e_{1}e_{2}+\left | \mathbf{M} \right | = \left ( a+c \right ) \left ( c+d \right )+ad-c^{2} \] 
	\[ =ac+c^{2}+ad+cd+ad-c^{2} \]
	\[ =c+\left ( a+d \right ) +2ad \].
	Premagic matrix $ \mathbf{M} $  has no inverse only if $ \left | \mathbf{M} \right |=ad-b^{2} = 0 $. Thus, $ b=\sqrt{ad} $. If the Premagic matrix $ \mathbf{M} $  has an inverse, then $ b^{2}\neq ad $. Thus, the inverse is $ \mathbf{M}_{-1}=\frac{1}{ad-b^{2}}\begin{bmatrix}
	d & -b\\ 
	-b & a
	\end{bmatrix} $, which is always premagic because the off-diagonal elements are equal to $ b $.
\end{proof}

\section{Relationship of Ideal Flow and Premagic Matrix}
In this section, we will describe our main result of the relationship between Ideal Flow matrix and Premagic matrix. 

\begin{theorem}
	\label{Theorem:9}
	Non-negative Premagic matrices represent node conservation flow in an irreducible directed network graph.
\end{theorem}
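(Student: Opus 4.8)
The plan is to make explicit the dictionary between a non-negative square matrix and a weighted directed network, and then to observe that the premagic condition is literally Kirchhoff-style flow conservation at every node. First I would associate to an $n \times n$ non-negative matrix $\mathbf{M}$ the directed graph on nodes $\{1,\dots,n\}$ whose link set consists of the ordered pairs $(i,j)$ with $m_{ij} \neq 0$, reading $m_{ij}$ as the amount of flow carried by the link from $i$ to $j$. Under this reading the row sum $\mathbf{s}_{2} = \mathbf{M}\mathbf{j}$ has $i$-th component equal to the total flow leaving node $i$, while the column sum $\mathbf{s}_{1}^{T} = \mathbf{j}^{T}\mathbf{M}$ has $i$-th component equal to the total flow entering node $i$.

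Second, I would invoke Definition \ref{def:4}: $\mathbf{M}$ premagic means $\mathbf{s}_{1} = \mathbf{s}_{2}$, that is, for each node $i$ the inflow equals the outflow. This is exactly the node conservation (continuity) law for a link flow on the network, so non-negative premagic matrices are precisely the non-negative link-flow assignments that conserve flow at every node. Conversely, any conserving non-negative link flow on a directed graph, written as a matrix of link values with zeros on absent links, satisfies $(\mathbf{M} - \mathbf{M}^{T})\mathbf{j} = \mathbf{0}$ and hence is premagic by Definition \ref{def:6}, which gives the correspondence in both directions. For completeness I would note that, by Equation \ref{eq:17}, removing the diagonal of $\mathbf{M}$ preserves premagicness, so self-loops do not affect conservation, consistent with the earlier observation that only the non-diagonal elements matter.

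Third, I would address the word \emph{irreducible}. Here the non-zero pattern of $\mathbf{M}$ is the adjacency structure of the associated digraph, and $\mathbf{M}$ being an irreducible matrix is equivalent to that digraph being strongly connected; this is the regime in which the flow can be realised as the link counts of a random walk (Equation \ref{eq:1} and Definition \ref{def:1}), closing the loop with the ideal-flow discussion of Section 3.

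The step I expect to be the main obstacle is not any computation but the precise formalisation of the phrase \emph{represents node conservation flow}: I need to state clearly whether the claim is the implication (every non-negative premagic matrix yields such a flow), its converse, or the full equivalence, and I must be careful that irreducibility is a hypothesis imposed on the network rather than something forced by premagicness, since a non-negative premagic matrix can perfectly well be block-diagonal and hence reducible. Pinning down these quantifiers, and keeping the treatment of zero entries and self-loops consistent, is where the care lies; once the dictionary is fixed, the mathematical content is just the definitional identity $\mathbf{s}_{1} = \mathbf{s}_{2} \Leftrightarrow \text{inflow} = \text{outflow}$ at every node.
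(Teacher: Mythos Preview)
Your proposal is correct and follows essentially the same argument as the paper: identify the $i$-th row sum with total outflow from node $i$, the $i$-th column sum with total inflow, and observe that the premagic condition $\mathbf{s}_{1}=\mathbf{s}_{2}$ is exactly node-by-node flow conservation, with the converse immediate. Your write-up is in fact more careful than the paper's own proof, since you make explicit the matrix--digraph dictionary, the role of self-loops via Equation~\ref{eq:17}, and the point that irreducibility is an ambient hypothesis on the network rather than a consequence of premagicness.
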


\begin{proof}
	Suppose we have a strongly connected network directed graph (irreducible) where the value of each link indicates flow. The sum of rows represent the sum of the outflow of a node $ \sum f_{out} $ and the sum of columns represent the sum of the inflow to a node $ \sum f_{in} $. Node conservation of flow means the sum of the inflow is equal to the sum of the outflow $ \sum f_{out} = \sum f_{in} = \upsilon $. Thus, if the flow on a node is conserved, then the matrix must be premagic and vice versa.  
\end{proof} 

\begin{corollary}
	\label{corollary:2}
	Ideal flow matrix is always premagic.
\end{corollary}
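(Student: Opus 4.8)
The plan is to deduce this corollary directly from Theorem~\ref{Theorem:9} (non-negative premagic matrices are exactly the node-conservation flows on an irreducible digraph) together with the scalar-multiple closure in Proposition~\ref{Proposition:1}. First I would examine the raw link-flow matrix $\mathbf{R}$ for fixed $N$ and $T$: for a single agent, every time its trajectory enters a node $i$ it must later leave $i$, so the inflow and outflow counted at $i$ differ by at most one unit, the discrepancy coming only from the two endpoints of the finite walk. Summing over the $N$ agents gives $\lVert(\mathbf{R}-\mathbf{R}^{T})\mathbf{j}\rVert_{\infty}\le N$, i.e. $\mathbf{R}$ is premagic up to an $O(N)$ boundary term.

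Next I would pass to the relative flow. By irreducibility of the network, every entry of $\mathbf{R}$ that is not forced to be zero is visited arbitrarily often as $N\cdot T\to\infty$, so $\min(\mathbf{R})\to\infty$. Hence $\bigl(\tfrac{\mathbf{R}}{\min(\mathbf{R})}-\bigl(\tfrac{\mathbf{R}}{\min(\mathbf{R})}\bigr)^{T}\bigr)\mathbf{j}$ has sup-norm at most $N/\min(\mathbf{R})\to 0$. Taking the limit in Equation~(\ref{eq:1}) and using that $\mathbf{X}\mapsto(\mathbf{X}-\mathbf{X}^{T})\mathbf{j}$ is continuous yields $(\mathbf{F}-\mathbf{F}^{T})\mathbf{j}=\mathbf{0}$, which is exactly Definition~\ref{def:6}; non-negativity of $\mathbf{F}$ is immediate since it is a limit of non-negative matrices. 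Equivalently, one can argue at the level of the limiting object: the asymptotic fractions of traversals balance at each node, so $\mathbf{F}$ is a genuine node-conservation flow on the irreducible graph, and Theorem~\ref{Theorem:9} applies verbatim.

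A second, cheaper route avoids the limiting argument entirely: if the agents are taken to perform closed walks (returning to their starting node), then $\mathbf{R}$ conserves flow exactly and is premagic by Theorem~\ref{Theorem:9}; the relative flow $\mathbf{R}/\min(\mathbf{R})$ is then premagic by the scalar-multiple closure (Equation~(\ref{eq:12})); and a limit of premagic matrices is again premagic because the premagic property is precisely the vanishing of the continuous linear map in Equation~(\ref{eq:4}). I would present the conservation-flow argument as the main line and mention this as a remark.

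The step I expect to be the main obstacle is the rigorous handling of the endpoint terms together with the existence of the limit defining $\mathbf{F}$: one must genuinely confirm both that $\min(\mathbf{R})\to\infty$ along the admissible entries, so that the $O(N)$ imbalance is washed out, and that the normalized matrices actually converge rather than merely oscillate. Once convergence and the $\min(\mathbf{R})\to\infty$ bound are in hand, the conclusion follows from continuity of a linear map plus the closure results already established.
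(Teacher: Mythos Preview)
Your argument is correct and considerably more careful than the paper's, but the route is genuinely different. The paper's proof of Corollary~\ref{corollary:2} is essentially a one-line appeal to Theorem~\ref{Theorem:9}: it simply asserts that an ideal flow satisfies node conservation on a strongly connected network and concludes premagic-ness from that. No attempt is made to justify conservation from the limiting construction in Equation~(\ref{eq:1}); it is taken as a defining feature of ``flow.'' By contrast, you actually work from the random-walk definition: you bound the inflow/outflow imbalance of the raw count matrix $\mathbf{R}$ by the $O(N)$ endpoint defect, show that normalization by $\min(\mathbf{R})$ kills this defect in the limit, and then use continuity of the linear map $\mathbf{X}\mapsto(\mathbf{X}-\mathbf{X}^{T})\mathbf{j}$ to pass to $\mathbf{F}$. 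Your closed-walk variant is a clean alternative that makes the limit step trivial.

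What each approach buys: the paper's version is short and matches the informal level of the surrounding text, but it leaves the only nontrivial point---why the asymptotic relative flow actually conserves at nodes---unargued. Your version fills exactly that gap and makes explicit where the irreducibility hypothesis and the closure properties of Proposition~\ref{Proposition:1} are used. The caveat you flag is real: the vanishing of $N/\min(\mathbf{R})$ needs $T\to\infty$ (if only $N\to\infty$ with $T$ fixed the ratio need not tend to zero), so your main line implicitly reads the joint limit $N\cdot T\to\infty$ as including unbounded $T$; your closed-walk remark sidesteps this entirely.
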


\begin{proof}
	If matrix $ \mathbf{M} $ is an ideal flow, then $ n_{i}= \sum_{j}  m_{ij} $ is the sum of inflow to node $ i $ and $ n_{j} = \sum_{i}  m_{ij} $ is the sum of outflow from node $ j $. If the flows on all nodes are conserved and the network is strongly connected, then the matrix must be premagic.
\end{proof}

\begin{theorem}
	\label{Theorem:10}
	Ideal flow matrix is converged and can be transformed into whole numbers.
\end{theorem}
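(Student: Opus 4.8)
The plan is to recognize the random walk as a finite irreducible Markov chain and extract both assertions from its stationary behaviour. First I would let $\mathbf{P}=[p_{ij}]$ be the transition matrix induced by the prescribed probability distribution over the arcs of the irreducible graph. Because the graph is strongly connected, $\mathbf{P}$ is irreducible, so by the Markov chain theory invoked earlier (\cite{Blanchard}) it admits a unique stationary distribution $\pi$ with $\pi_i>0$ for every node $i$. The link flow $\mathbf{R}$ produced by $N$ agents over $T$ steps is the tally of arc traversals, and $r_{ij}\neq 0$ precisely on the arcs of the graph.

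For convergence, I would apply the ergodic theorem for Markov chains: the empirical frequency with which the chain uses arc $(i,j)$ converges, as the total step count $N\cdot T\to\infty$, to $\pi_i p_{ij}$ (almost surely, and in expectation). Hence $\mathbf{R}/(N\cdot T)$ converges entrywise to the matrix $\mathbf{Q}=[\pi_i p_{ij}]$, whose support is exactly the arc set. Dividing the numerator and denominator of the ratio in Equation~(\ref{eq:1}) by $N\cdot T$ gives
\[ \mathbf{F}=\lim_{N\cdot T\to\infty}\frac{\mathbf{R}}{\min\mathbf{R}}=\frac{\mathbf{Q}}{\displaystyle\min_{(k,l):\,p_{kl}\neq 0}\pi_k p_{kl}}, \]
so the limit exists; by Corollary~\ref{corollary:2} the resulting matrix is premagic, which re-confirms node conservation.

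For the ``whole numbers'' assertion I would use rationality of the data: when the prescribed arc probabilities are rational, $\pi$ is the solution of the rational linear system $(\mathbf{I}-\mathbf{P}^{T})\pi=\mathbf{0}$ with normalization $\mathbf{j}^{T}\pi=1$, so $\pi$ --- and therefore every entry of $\mathbf{Q}$ and of $\mathbf{F}$ --- is rational. Multiplying $\mathbf{F}$ by the least common multiple of the denominators of its entries yields an integer matrix, and by the scalar-multiple closure in Proposition~\ref{Proposition:1} (Equation~(\ref{eq:12})) this integer matrix is still premagic; it is the whole-number ideal flow.

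The step I expect to be the main obstacle is the convergence argument: it must be phrased so that what stabilizes is the \emph{ratio} $r_{ij}/\min\mathbf{R}$ rather than the individual (unbounded) counts, and the minimum must be taken over the fixed arc support so that the denominator does not itself drift with $N\cdot T$. The integrality half is comparatively routine once one assumes rational input probabilities; without that assumption the claim is best read as asserting the existence of a scaling to whole numbers exactly when the underlying data are rational.
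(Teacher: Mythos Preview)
Your approach is essentially the same as the paper's: form the transition matrix, invoke the stationary distribution of the irreducible chain (via Perron--Frobenius and \cite{Blanchard}) for convergence, assert rationality of the resulting relative flows, and clear denominators using the LCM together with the scalar-multiple closure of Equation~(\ref{eq:12}). Your version is in fact more careful than the paper's --- you supply the ergodic-theorem step that identifies the limiting link frequencies as $\pi_i p_{ij}$ and you correctly flag that rationality of $\mathbf{F}$ depends on rational input probabilities, a hypothesis the paper's proof leaves implicit.
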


\begin{proof}
	To achieve node stationary distribution to form transition probability matrix, which has Markov property $ \mathbf{P} = \left [ p_{ij} \right ] = Pr\left (v_{t+1}=j |v_{t}=i \right )> 0, i\neq j $. For undirected graph, Transition Probability Matrix is a symmetric matrix and equal to $ \mathbf{P} = \mathbf{D}^{-1}\mathbf{A} $, where matrix $ \mathbf{D} $ is a diagonal matrix of each node degree and $ \mathbf{A} $ is adjacency matrix of the graph. The vector of stationary distribution of nodes is called Perron's vector denoted by $  \mathbf{\pi} $. Perron-Frobenius theorem (\cite{Seneta}) states that the left Eigen vector of the transition matrix is associated to the maximal eigenvalue $ \lambda =1 $ such that $ \mathbf{\pi P}=1\mathbf{\pi}  $. \cite{Blanchard} showed that the equation above converged for irreducible network. Since ideal flow is a relative flow, they are rational number and since multiplying with a scalar (as in equation (\ref{eq:12})) as does not change the properties of premagic matrix, we can multiply with the Least Common Multiple (LCM) of the denominators of the elements of the matrix to produce matrix of whole numbers with the same meaning. 
\end{proof}

The following two propositions can be used to compute premagic matrix directly without random walk simulation of the ideal flow.

\begin{theorem}
	\label{Theorem:12}
	Non-negative premagic matrices can be converted to Markov Transition Matrix and vice versa.
\end{theorem}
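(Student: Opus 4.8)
The plan is to exhibit explicit maps in both directions and to observe that the defining identity of a premagic matrix, $\mathbf{j}^{T}\mathbf{M}=(\mathbf{M}\mathbf{j})^{T}$, is exactly the stationarity equation $\boldsymbol{\pi}^{T}\mathbf{P}=\boldsymbol{\pi}^{T}$ read through a diagonal rescaling. The premagic property is what makes the common row/column sum vector serve as the Perron vector of the associated transition matrix, and conversely the Perron vector is what ``premagic-izes'' a stochastic matrix.

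First, take a non-negative premagic matrix $\mathbf{M}$ arising from a strongly connected graph, and let $\mathbf{s}=\mathbf{M}\mathbf{j}$ be the common row-sum/column-sum vector (equal by Definition \ref{def:4}). Irreducibility forces every component $s_i>0$, so $\mathbf{D}=\mathrm{diag}(\mathbf{s})$ is invertible; set $\mathbf{P}=\mathbf{D}^{-1}\mathbf{M}$. Every entry of $\mathbf{P}$ is non-negative and each row sums to $1$, so $\mathbf{P}$ is a Markov transition matrix, and moreover $\mathbf{s}^{T}\mathbf{P}=\mathbf{s}^{T}\mathbf{D}^{-1}\mathbf{M}=\mathbf{j}^{T}\mathbf{M}=\mathbf{s}_{1}^{T}=\mathbf{s}_{2}^{T}=\mathbf{s}^{T}$, so the normalized $\mathbf{s}$ is the stationary (Perron) vector of $\mathbf{P}$. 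Conversely, given an irreducible Markov transition matrix $\mathbf{P}$ with Perron vector $\boldsymbol{\pi}>0$ satisfying $\boldsymbol{\pi}^{T}\mathbf{P}=\boldsymbol{\pi}^{T}$, define $\mathbf{M}=\mathrm{diag}(\boldsymbol{\pi})\mathbf{P}$, i.e. $m_{ij}=\pi_i p_{ij}\ge 0$. Its row sums are $\sum_j m_{ij}=\pi_i$ and its column sums are $\sum_i m_{ij}=(\boldsymbol{\pi}^{T}\mathbf{P})_j=\pi_j$, so the row-sum vector equals the column-sum vector and $\mathbf{M}$ is a non-negative premagic matrix. Applying the first construction to this $\mathbf{M}$ recovers $\mathbf{D}^{-1}\mathbf{M}=\mathrm{diag}(\boldsymbol{\pi})^{-1}\mathrm{diag}(\boldsymbol{\pi})\mathbf{P}=\mathbf{P}$, so the two maps are mutually inverse up to the overall positive scale of $\mathbf{M}$.

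The routine algebra is not the difficulty; the main obstacle is the bookkeeping around degeneracy and normalization. One must argue that a premagic matrix coming from a strongly connected graph has no vanishing row sum (if $s_i=0$ then non-negativity forces the whole $i$-th row to vanish, contradicting strong connectivity, and the premagic identity then also annihilates column $i$), and one must state explicitly that the correspondence is a bijection only after fixing a normalization of $\boldsymbol{\pi}$ (for instance $\mathbf{j}^{T}\boldsymbol{\pi}=1$, or the minimum-entry normalization of Equation \ref{eq:1}), since premagic matrices form a scaling-invariant family whereas stochastic matrices do not. Existence and uniqueness of $\boldsymbol{\pi}$ in the reverse direction are quoted from the Perron--Frobenius theorem already invoked for Theorem \ref{Theorem:10}.
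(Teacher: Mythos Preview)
Your argument is correct. The forward direction coincides with the paper's: both divide each row of $\mathbf{M}$ by its row sum to obtain a row-stochastic matrix. The reverse direction, however, is genuinely different. The paper simply writes $m_{ij}=n_{i}s_{ij}$, which tacitly assumes the row-sum vector $(n_{i})$ is supplied alongside the transition matrix; as a map from stochastic matrices alone back to premagic matrices this is underdetermined. You instead recover the scaling intrinsically by taking $\mathbf{M}=\mathrm{diag}(\boldsymbol{\pi})\mathbf{P}$ with $\boldsymbol{\pi}$ the Perron vector of $\mathbf{P}$, and you verify directly that the resulting row and column sums both equal $\boldsymbol{\pi}$. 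This buys a well-defined inverse (up to the unavoidable global scale) and makes transparent why premagic is exactly the stationarity condition $\boldsymbol{\pi}^{T}\mathbf{P}=\boldsymbol{\pi}^{T}$ rewritten through $\mathbf{D}$. Your additional bookkeeping on irreducibility (to guarantee $s_{i}>0$) and on normalization is also absent from the paper's proof and is needed for the statement to be literally true.
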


\begin{proof}
	Suppose we have premagic matrix $ \mathbf{M}=\left [ m_{ij} \right ]  $. The elements of the sum of row is $ n_{i}= \sum_{j}  m_{ij}. $. The elements of the sum of column is $ n_{j}= \sum_{i}  m_{ij}. $. The premagic property stated that $ n_{i}= n_{j}  \forall i=j $.  Markov (row stochastic) transition matrix is obtained using
	\begin{equation}
	\mathbf{S}=\left [ s_{ij} \right ] = \frac{m_{ij}}{\sum_{j} m_{ij} }= \frac{m_{ij}}{n_{i}}
	\label{eq:25}
	\end{equation}
	To obtain a premagic matrix from Markov transition matrix, simply use
	\begin{equation}
	\mathbf{M}=\left [ m_{ij} \right ] = n_{i}s_{ij}
	\label{eq:26}
	\end{equation}
	Note that Markov transition matrix itself is not necessarily premagic. 
\end{proof}

\begin{theorem}
	\label{Theorem:13}
	Non-negative premagic matrices can be converted to double stochastic Markov transition matrix and vice versa.
\end{theorem}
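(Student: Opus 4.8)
The plan is to prove the two implications separately; the ``vice versa'' half is immediate and the forward half needs a short construction.

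First I would dispose of the easy direction. A doubly stochastic matrix $\mathbf{Q}$ satisfies $\mathbf{Q}\mathbf{j}=\mathbf{j}$ and $\mathbf{j}^{T}\mathbf{Q}=\mathbf{j}^{T}$, so both its row sum vector $\mathbf{s}_{2}=\mathbf{Q}\mathbf{j}$ and its column sum vector $\mathbf{s}_{1}=(\mathbf{j}^{T}\mathbf{Q})^{T}$ equal $\mathbf{j}$; hence $\mathbf{s}_{1}=\mathbf{s}_{2}$ and $\mathbf{Q}$ is premagic by Definition~\ref{def:4}. No genuine conversion is required in this direction, $\mathbf{Q}$ already being premagic; if a non-trivial premagic matrix is wanted one may take any scalar multiple $k\mathbf{Q}$, which stays premagic by Proposition~\ref{Proposition:1}.

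For the forward direction, let $\mathbf{M}=[m_{ij}]$ be a non-negative premagic matrix, assumed not identically zero, with row and column sum components $n_{i}=\sum_{j}m_{ij}=\sum_{j}m_{ji}$, and put $v=\max_{i}n_{i}$; by Theorem~\ref{Theorem:7} this is the $1$-norm of $\mathbf{M}$ and it is positive. I would then raise each diagonal entry, replacing $m_{ii}$ by $m_{ii}+(v-n_{i})$. Since $v-n_{i}\ge 0$ the matrix stays non-negative, and since only diagonal entries are altered it stays premagic by the diagonal-addition closure of Proposition~\ref{Proposition:1} (equation~\eqref{eq:16}). After the adjustment every row sum equals $n_{i}+(v-n_{i})=v$, and because the increment $v-n_{i}$ sits on the diagonal it also lifts column sum $i$ to $v$; dividing the adjusted matrix by $v$ therefore produces a non-negative matrix whose every row sum and column sum equals $1$, i.e.\ a doubly stochastic matrix. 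Such a matrix is in particular row-stochastic, so this is consistent with the premagic-to-Markov correspondence of Theorem~\ref{Theorem:12}.

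The step I expect to be the main obstacle is the forward construction itself. The naive attempt is to reach a doubly stochastic matrix by rescaling rows and columns (the Sinkhorn--Knopp idea), but a non-negative premagic matrix whose zeros lie in the wrong positions---for instance the $2\times 2$ matrix with first row $(0,1)$ and second row $(1,2)$---admits no positive diagonal rescaling to a doubly stochastic matrix, so that route would need support hypotheses. The construction above sidesteps this by exploiting the freedom, granted by equation~\eqref{eq:16}, to modify the diagonal without leaving the premagic class. The only remaining bookkeeping is the observation---already used in the proof of Proposition~\ref{Proposition:1}---that adding a quantity to a diagonal entry increases the corresponding row sum and column sum by exactly the same amount, together with the trivial exclusion of the all-zero matrix, for which $v=0$ and no doubly stochastic representative exists.
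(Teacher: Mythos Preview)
Your argument is correct, and it differs from the paper's in a way worth recording. For the forward direction the paper does something much simpler: it divides every entry of $\mathbf{M}$ by the grand total $\kappa=\sum_{i}\sum_{j}m_{ij}$, declares the result doubly stochastic, and for the reverse direction multiplies back by $\kappa$. Your construction instead pads the diagonal by $v-n_{i}$ to equalise all the $n_{i}$ before normalising by $v=\max_{i}n_{i}$.

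What your route buys is that the output really is doubly stochastic in the usual sense (every row sum and every column sum equal to $1$). The paper's division by $\kappa$ only rescales: the resulting matrix has total mass $1$ and is still premagic, but its $i$th row sum is $n_{i}/\kappa$, which need not equal $1$ unless all the $n_{i}$ were already equal (take, e.g., the diagonal premagic matrix $\mathrm{diag}(1,2)$). So the paper's construction is shorter but only succeeds under an extra hypothesis it does not state, whereas your diagonal-padding step, justified via equation~\eqref{eq:16}, handles the general non-negative premagic case. Your discussion of why a pure Sinkhorn-type rescaling would fail is a nice addition and has no counterpart in the paper.
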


\begin{proof}
	Double Stochastic Markov Transition matrix can be obtained by dividing each entry of a premagic matrix with the total of all elements in the matrix
	\begin{equation}
	\mathbf{S}=\left [ s_{ij} \right ] = \frac{m_{ij}}{\sum_{i}\sum_{j} m_{ij} }
	\label{eq:27}
	\end{equation}
	The double stochastic Markov transition matrix obtained through equation (\ref{eq:27}) itself is a premagic matrix due to the Shift property in equation (\ref{eq:15}). Knowing the total flow in the system $ \kappa=\sum_{i}\sum_{j} m_{ij} $, we can obtain a premagic matrix from double stochastic Markov transition matrix as
	\begin{equation}
	\mathbf{M}=\left [ m_{ij} \right ]=\kappa s_{ij}
	\label{eq:28}
	\end{equation}
\end{proof}

\section{Open Problems}
Here are our conjectures that can be found using a computer simulation but they need mathematical proofs.
\begin{conjecture}
	For a non-negative premagic matrix, both the 2-Norm and the Frobenius norm of the eigenvector of the dominant eigenvalue are equal to one.
\end{conjecture}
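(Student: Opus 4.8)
\medskip

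The plan is to observe first that the two quantities named in the conjecture are literally the same number, so that the entire content is a normalization statement about a single well-defined object. For any column vector $\mathbf{v}$ regarded as an $n\times 1$ matrix, the Frobenius norm $\|\mathbf{v}\|_{F}=\sqrt{\sum_{i}|v_{i}|^{2}}$ coincides identically with the Euclidean ($2$-)norm $\|\mathbf{v}\|_{2}$; nothing about eigenvectors or about the premagic property enters this coincidence. Hence the conjecture is equivalent to the claim that ``the'' dominant eigenvector of a non-negative premagic matrix is a canonically normalized, unit-length vector. First I would make that object precise; the numeric value $1$ is then immediate from the identity above.

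To pin down ``the'' eigenvector I would invoke the Perron--Frobenius theorem (\cite{Seneta}), exactly as in the proof of Theorem~\ref{Theorem:10}. A premagic matrix that models node-conserving flow on a strongly connected digraph --- in particular an ideal flow matrix, by Corollary~\ref{corollary:2} --- is non-negative and irreducible, so its spectral radius $\rho(\mathbf{M})$ is a simple eigenvalue, namely the dominant one, with a one-dimensional eigenspace spanned by a strictly positive vector $\mathbf{v}$. This $\mathbf{v}$ is unique up to a positive scalar, and rescaling it to $\mathbf{v}/\|\mathbf{v}\|_{2}$ turns ``the eigenvector of the dominant eigenvalue'' into a well-defined object for which $\|\mathbf{v}\|_{2}=\|\mathbf{v}\|_{F}=1$ by the previous paragraph. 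For a general, possibly reducible, non-negative premagic matrix one still obtains a non-negative dominant eigenvector from the general Perron--Frobenius theorem, but it need not be unique; there the conjecture should carry an existential quantifier, or the hypothesis \emph{irreducible} should be added, which costs nothing in the ideal flow setting.

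The hard part will be not a computation but a choice of definition: which normalization is ``the eigenvector'' supposed to carry? If the statement was observed through the usual numerical linear-algebra convention --- eigenvectors reported with unit Euclidean norm --- then the two paragraphs above are the whole proof. If instead one has in mind a flow-intrinsic scaling, such as entries summing to the total flow $\kappa=\sum_{i}\sum_{j}m_{ij}$ or the coprime integer representative produced by the LCM rescaling in Theorem~\ref{Theorem:10}, then the conjecture is \emph{false}: for $n\ge 2$ those representatives have $2$-norm different from $1$. So the step I expect to be decisive is simply fixing the intended normalization; once that is settled the statement is either trivial (unit-norm convention) or refutable (any scaling determined by the flow itself). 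If a statement with genuine content is wanted in this direction, a safe replacement is that the dominant eigenvalue of a non-negative premagic matrix is real and non-negative and admits a non-negative eigenvector, simple and strictly positive when the matrix is irreducible --- all of which follow directly from Perron--Frobenius and are already used implicitly in Theorem~\ref{Theorem:10}.
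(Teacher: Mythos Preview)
The paper does not prove this statement: it is listed as Conjecture~1 under ``Open Problems,'' with the explicit remark that these are observations from computer simulation that ``need mathematical proofs.'' There is therefore no authorial argument to compare your proposal against.

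Your analysis is correct and, in fact, dissolves the conjecture rather than proving it. The key observation --- that for any column vector $\mathbf{v}$ one has $\|\mathbf{v}\|_{2}=\|\mathbf{v}\|_{F}$ identically, independent of any spectral or premagic hypothesis --- is decisive. Once that is noted, the conjecture collapses to the claim that ``the'' dominant eigenvector has unit Euclidean norm, which is not a property of the matrix at all but a convention about which representative of the eigenspace one reports. Your diagnosis that the conjecture is most likely an artifact of numerical software (which returns unit-$\ell^{2}$ eigenvectors by default) is the right explanation for why the author observed it in simulation. Your further remark that any flow-intrinsic normalization (entries summing to $\kappa$, or the LCM-cleared integer representative of Theorem~\ref{Theorem:10}) falsifies the statement for $n\ge 2$ is also correct.

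If anything, you could be more direct: this is not an open problem but an ill-posed one, and the appropriate resolution is not a proof but a clarification of what the author means by ``the eigenvector.'' Your suggested replacement --- that the dominant eigenvalue is real, non-negative, and simple with a strictly positive eigenvector under irreducibility --- is the genuine Perron--Frobenius content and is indeed already implicit in the paper's use of Perron's vector in Theorem~\ref{Theorem:10}.
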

\begin{conjecture}
	Premagic matrix is diagonalizable. The eigenvectors of a premagic matrix has full rank.
\end{conjecture}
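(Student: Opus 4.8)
The natural opening is to peel off the part of the claim that comes for free. By Theorem~\ref{Theorem:1} every symmetric matrix is premagic, and by the spectral theorem every symmetric matrix is diagonalizable with an orthogonal---hence full-rank---eigenbasis, so the conjecture holds on that subset with no work; all the content lies in the genuinely non-symmetric premagic matrices. I would then work dimension by dimension. For $n=2$ a direct check---the two off-diagonal entries must be equal, as in the proof of Theorem~\ref{Theorem:8}---shows that every $2\times 2$ premagic matrix is symmetric, so the statement is trivially true there. For $n=3$ the premagic matrices span a $7$-dimensional subspace: the $6$-dimensional space of symmetric matrices, together with the single antisymmetric direction $\mathbf{A}$ (first row $(0,1,-1)$) that is annihilated by $\mathbf{j}$. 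So the first genuinely new examples live here, and this is where I would push for a proof or hunt for a counterexample.

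For $n=3$ I would look among companion-type matrices whose first two rows are $(0,1,0)$ and $(0,0,1)$, since diagonalizability is transparent for those. Imposing the premagic condition collapses them to the one-parameter family
\[
\mathbf{M}(z)=\begin{pmatrix} 0 & 1 & 0\\ 0 & 0 & 1\\ 1 & 0 & z\end{pmatrix},\qquad z\in\mathbb{R},
\]
whose row sums and column sums are both $(1,1,1+z)$. Each $\mathbf{M}(z)$ is non-derogatory ($\mathbf{e}_1$ is a cyclic vector), so it is diagonalizable precisely when its characteristic polynomial $p(\lambda)=\lambda^{3}-z\lambda^{2}-1$ has distinct roots. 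Solving $p=p'=0$ shows that, for real $z$, this fails only at $z=-3/\sqrt[3]{4}$, where $\lambda=-2/\sqrt[3]{4}$ has algebraic multiplicity $2$ but geometric multiplicity $1$---the top-left $2\times 2$ minor of $\mathbf{M}(z)-\lambda\mathbf{I}$ does not vanish there, so that matrix has rank $2$.

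Hence the step I would flag as the ``main obstacle''---bounding the defective locus---turns out to be fatal: the conjecture as stated is \emph{false}, with $\mathbf{M}(-3/\sqrt[3]{4})$ an explicit non-diagonalizable premagic matrix. What I would then try to salvage is a corrected statement. Diagonalizability is immediate for symmetric premagic matrices; the substantive question is whether a \emph{non-negative} premagic matrix can be defective. The counterexample above has a negative entry, and the Perron--Frobenius input used in the proof of Theorem~\ref{Theorem:10} controls only the dominant eigenvalue, not the rest of the spectrum. I would attack the non-negative case with the same ``companion-type inside the non-negative premagic cone'' strategy---testing whether the discriminant hypersurface of the characteristic polynomial ever meets that cone away from the derogatory locus---and I suspect that settling this, rather than proving the conjecture, is the real open problem here.
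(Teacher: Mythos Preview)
There is nothing in the paper to compare against: this statement appears in the \emph{Open Problems} section as a conjecture, with no proof or proof sketch. So your write-up is not an alternative to the paper's argument; it is a resolution of a problem the paper leaves open.

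And your resolution is correct. The family
\[
\mathbf{M}(z)=\begin{pmatrix} 0 & 1 & 0\\ 0 & 0 & 1\\ 1 & 0 & z\end{pmatrix}
\]
is genuinely premagic for every real $z$ (row sums and column sums both equal $(1,1,1+z)$), it is the companion matrix of $p(\lambda)=\lambda^{3}-z\lambda^{2}-1$, and your discriminant computation is right: $p$ and $p'$ share the root $\lambda=2z/3$ exactly when $z^{3}=-27/4$, i.e.\ $z=-3/\sqrt[3]{4}$. Companion matrices are non-derogatory, so a repeated root of $p$ forces a nontrivial Jordan block, and your rank check confirms geometric multiplicity $1$. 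Hence $\mathbf{M}(-3/\sqrt[3]{4})$ is a premagic matrix that is \emph{not} diagonalizable, and the conjecture as stated is false. Your preliminary remarks (every $2\times2$ premagic matrix is symmetric; the $3\times3$ premagic matrices form a $7$-dimensional space, namely the symmetric matrices plus the single antisymmetric direction killed by $\mathbf{j}$) are also correct and frame the search well.

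One small caveat on the salvage you propose. Restricting to non-negative premagic matrices is natural given the paper's emphasis, but be aware that this class contains all doubly stochastic matrices, and those are known not to be diagonalizable in general once $n\ge 4$. So the ``corrected'' conjecture is likely false as well; your companion-in-the-cone strategy should find a witness without much trouble, and that would sharpen the note into a clean refutation of both the stated and the obvious repaired version.
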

The Perron Frobenius theorem (see the proof in \cite{Seneta}) gives us a clue that a simple (unique, non-repeating) positive real eigenvalue exists for a non-negative square matrix. The Perron Frobenius theorem also says that the real eigenvalue is dominant, and that the corresponding eigenvector is real and positive.  
\begin{conjecture}
	Suppose $ \lambda_{1} $  is the dominant eigenvalue of non-negative premagic matrix $ \mathbf{M}_{1} $  and $ \mathbf{v} $  is the corresponding eigenvector. If we obtain $ \mathbf{M}_{2} = k \mathbf{M}_{1} k\in \mathbb{R}^{+} $, then the dominant eigenvalue of $ \mathbf{M}_{2} $  is $ \lambda_{2}=k\lambda_{1} $ and the corresponding eigenvector is $ \mathbf{v} $.
\end{conjecture}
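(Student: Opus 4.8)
The plan is to reduce the statement to the elementary fact that multiplying a matrix by a positive scalar scales every eigenvalue by that scalar while fixing every eigenvector, and then to confirm that this operation cannot change which eigenvalue is the dominant one. None of the premagic structure is really needed for the algebra; it enters only through Perron--Frobenius, which is already available in the paper.

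First I would check the eigenvector claim directly. From $\mathbf{M}_1 \mathbf{v} = \lambda_1 \mathbf{v}$ and $\mathbf{M}_2 = k\mathbf{M}_1$ we get $\mathbf{M}_2 \mathbf{v} = k\mathbf{M}_1 \mathbf{v} = k\lambda_1 \mathbf{v}$, so $\mathbf{v}$ is an eigenvector of $\mathbf{M}_2$ with associated eigenvalue $k\lambda_1$. Next I would pin down the entire spectrum of $\mathbf{M}_2$, so that the word ``dominant'' is justified. For an $n \times n$ matrix the characteristic polynomial satisfies $\det(\mathbf{M}_2 - \mu \mathbf{I}) = \det(k\mathbf{M}_1 - \mu \mathbf{I}) = k^{n}\,\det\!\big(\mathbf{M}_1 - (\mu/k)\mathbf{I}\big)$, so the eigenvalues of $\mathbf{M}_2$ are exactly $k\mu_1, \dots, k\mu_n$ counted with multiplicity, where $\mu_1, \dots, \mu_n$ are those of $\mathbf{M}_1$. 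Since $k > 0$ we have $|k\mu_i| = k\,|\mu_i|$, so $\mu \mapsto k\mu$ is strictly order-preserving on moduli; hence the eigenvalue of $\mathbf{M}_1$ of largest modulus is carried to the eigenvalue of $\mathbf{M}_2$ of largest modulus.

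Then I would invoke Perron--Frobenius for both matrices to identify that largest-modulus eigenvalue concretely. Because $k > 0$ and $\mathbf{M}_1$ is non-negative, $\mathbf{M}_2 = k\mathbf{M}_1$ is non-negative as well, and (in the irreducible setting used elsewhere in the paper) the dominant eigenvalue of each is the unique positive real one with a positive eigenvector. Thus $\lambda_1 > 0$ forces $\lambda_2 = k\lambda_1 > 0$, which is consistent with Perron--Frobenius applied to $\mathbf{M}_2$, and the corresponding eigenvector can be taken to be $\mathbf{v}$ by the first step; since that eigenspace is one-dimensional, every dominant eigenvector of $\mathbf{M}_2$ is a positive scalar multiple of $\mathbf{v}$.

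The main obstacle here is bookkeeping rather than depth: one must make sure the notion of ``dominant eigenvalue'' is unambiguous (uniqueness of the largest-modulus eigenvalue, supplied by Perron--Frobenius under irreducibility) so that ``the corresponding eigenvector is $\mathbf{v}$'' is meaningful, and one should state it as ``$\mathbf{v}$ spans the same one-dimensional eigenspace,'' since eigenvectors are only defined up to scale. A minor point to flag is the edge case $\lambda_1 = 0$: this cannot occur for an irreducible non-negative matrix, so the hypotheses already exclude it, but it is worth a sentence.
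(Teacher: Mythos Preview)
Your argument is correct and complete: the eigenvalue/eigenvector scaling identity $\mathbf{M}_2\mathbf{v} = k\mathbf{M}_1\mathbf{v} = k\lambda_1\mathbf{v}$ together with the characteristic-polynomial computation $\det(k\mathbf{M}_1 - \mu\mathbf{I}) = k^n\det(\mathbf{M}_1 - (\mu/k)\mathbf{I})$ shows the spectrum scales by $k$, and since $k>0$ preserves the modulus ordering, dominance is preserved. Your remarks on Perron--Frobenius and on eigenvectors being defined only up to scale are appropriate caveats.

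There is nothing to compare against: the paper does \emph{not} prove this statement. It appears in the ``Open Problems'' section as a conjecture that the author says ``can be found using a computer simulation but \ldots need[s] mathematical proofs.'' Your write-up therefore resolves what the paper leaves open, and does so by the most natural route --- indeed, the result is a general fact about scalar multiples of any square matrix and, as you note, uses none of the premagic structure.
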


\section{Conclusion}
We established the theory of premagic matrix through its properties. 
Ideal flow matrix, which originally comes from random walk on strongly connected network, is always premagic matrix because flow conservation on nodes. Using the premagic property, we showed that the ideal flow matrix can be obtained from Markov Matrix. Using Markov Chain, we generalize the results of random walk to go beyond uniform distribution.
The ideal flow can be applied to any strongly connected networks such as transportation or communication networks.

\begin{acks}
This research is supported by the Philippine Higher Education Research Network (PHERNET).
The author also would like to thank Wiwat Wanicharpichat and Peter T. Brauer for useful discussion in Research Gate.
\end{acks}

\vspace*{\fill}

\end{document}